\begin{document}

\newcommand{\half}{{\textstyle{\frac{1}{2}}}}
\newcommand{\balpha}{{\boldsymbol{\alpha}}}
\newcommand{\B}{{\bf 1}}
\newcommand{\e}{{\bf e}}
\newcommand{\y}{{\bf r}}
\newcommand{\x}{{\bf x}} 
\newcommand{\tx}{{\bar{x}}}                   
\newcommand{\bn}{{\bf n}}

\newcommand{\bS}{{\bf S}}
\newcommand{\eg}{{\rm eg}}
\newcommand{\mor}{{\rm Mor}}
\newcommand{\w}{{\bf v}}
\newcommand{\tS}{{\widetilde{S}}}

%
%

\newcommand{\la}{\langle}
\newcommand{\ra}{\rangle}

\newcommand{\aff}  {{\rm Aff}_+}                 
\newcommand{\Aff}  {{\rm Aff}}                 

\newcommand{\Conf}{{\rm Config}}                 
\newcommand{\conf}{{\rm config}}                 
 
\newcommand{\PSL} {\Pj\Sl_2(\R)}                           
\newcommand{\PGL} {\Pj\Gl_2(\R)}                           
\newcommand{\PGLC} {\Pj\Gl_2(\C)}                          
\newcommand{\RP} {\R\Pj^1}                                 
\newcommand{\CP} {\C\Pj^1}                                 
\newcommand{\C} {{\mathbb C}}                              
\newcommand{\R} {{\mathbb R}}                              
\newcommand{\Z} {{\mathbb Z}}                              
\newcommand{\Pj} {{\mathbb P}}                             
\newcommand{\Sg} {{\mathbb S}}                             
\newcommand{\Gl} {{\rm Gl}}                                
\newcommand{\Sl} {{\rm Sl}}                                
\newcommand{\SO} {{\rm SO}}                                
\newcommand{\sM}{{\sf M}}
\newcommand{\wsM}{{\widetilde{\sf M}}}
\newcommand{\orb}{{\rm orb}}
\newcommand{\PGl}{{\rm PGl}} 
\newcommand{\SP}{{\rm SP}}
\newcommand{\Ot}{{\rm O}}
\newcommand{\D}{{\rm D}}
\newcommand{\reg}{{\rm reg}}

\newcommand{\cA}{{\mathcal A}}
\newcommand{\cB}{{\mathcal B}}
\newcommand{\cC}{{\mathcal C}}
\newcommand{\blambda}{{\boldsymbol{\lambda}}}
\newcommand{\bDelta}{{\boldsymbol{\Delta}}}
\newcommand{\bdelta}{{\boldsymbol{\delta}}}
\newcommand{\cQ}{{\mathcal Q}}
\newcommand{\dQ}{{\mathcal Q}^0}   
\newcommand{\qQ}{{\widehat{\cQ}}}
\newcommand{\tQ}{\widetilde{\cQ}}
\newcommand{\sQ}{{\sf Q}}
\newcommand{\tsQ}{{\widetilde{\sf Q}}} 

\newcommand{\cell}{{\rm Cell}}
\newcommand{\cells}{{\rm Cells}}
\newcommand{\iso}{{\rm Iso}}

\newcommand{\PV}  [1] {\ensuremath{\Pj V^{#1}}}
\newcommand{\SV}  [1] {\ensuremath{\Sg V^{#1}}}
\newcommand{\PVM} [1] {\ensuremath{\Pj V^{#1}_{\#}}}
\newcommand{\SVM} [1] {\ensuremath{\Sg V^{#1}_{\#}}}

\newcommand{\metmap}{\mathfrak{d}}

\newcommand{\dis} {{\mathcal D}}                                 
\newcommand{\diagonal}{\triangle}                                

\newcommand{\oM} [1] {\ensuremath{{\mathcal M}_{0,#1}(\R)}}             					
\newcommand{\M} [1] {\ensuremath{{\overline{\mathcal M}}{_{0, #1}(\R)}}}					
\newcommand{\oOM} [1] {\ensuremath{{\mathcal M}_{0,#1}^{{\rm or}}(\R)}}					
\newcommand{\OM} [1] {\ensuremath{{\overline{\mathcal M}}{_{0, #1}^{{\rm or}}(\R)}}}		
\newcommand{\KM} [1] {\ensuremath{{\overline{\mathcal M}}{_{0, #1}^{{\rm kap}}(\R)}}}		
\newcommand{\cM} [1] {\ensuremath{{\mathcal M}_{0, #1}}}                  					
\newcommand{\CM} [1] {\ensuremath{{\overline{\mathcal M}}_{0, #1}}}       					
\newcommand{\iDelta}{{\overset{o}\Delta}}

\newcommand{\bT}{{\overline{\mathcal T}}}
\newcommand{\T}[1]{{\mathcal T}_#1} 
\newcommand{\pT}[1]{{\bf T}_{#1}(\R)}             
\newcommand{\BHV}[1]{{\rm BHV}_{#1}}
\newcommand{\iBHV}[1]{{\rm BHV}_{#1}^+} 

\newcommand{\tsM} [1]{\ensuremath{{\widetilde{\sf M}}{_{0, #1}(\R)}}}
\newcommand{\gM} [1] {{\sf M}_{#1}}
\newcommand{\tgM} [1] {\widetilde{\sf M}_{#1}}

\newcommand{\hide}[1]{}

\newcommand{\suchthat} {\:\: | \:\:}
\newcommand{\ore} {\ \ {\it or} \ \ }
\newcommand{\oand} {\ \ {\it and} \ \ }

%
%

\theoremstyle{plain}
\newtheorem{thm}{Theorem}
\newtheorem{prop}[thm]{Proposition}
\newtheorem{cor}[thm]{Corollary}
\newtheorem{lem}[thm]{Lemma}
\newtheorem{conj}[thm]{Conjecture}
\newtheorem*{arnlem}{Arnold's Lemma}

\theoremstyle{definition}
\newtheorem*{defn}{Definition}
\newtheorem*{exmp}{Example}

\theoremstyle{remark}
\newtheorem*{rem}{Remark}
\newtheorem*{hnote}{Historical Note}
\newtheorem*{nota}{Notation}
\newtheorem*{ack}{Acknowledgments}
\numberwithin{equation}{section}


\title{Diagonalizing the Genome II: Toward Possible Applications}

\author{Satyan L.\ Devadoss}
\address{S.\ Devadoss: Williams College, Williamstown, MA 01267}
\email{satyan.devadoss@williams.edu}

\author{Jack Morava}
\address{J.\ Morava: Johns Hopkins University, Baltimore, MD 21218}
\email{jack@math.jhu.edu}

\begin{abstract}
In a previous paper, we showed that the orientable cover of the moduli space of real genus zero 
algebraic curves with marked points is a compact aspherical manifold tiled by associahedra, 
which resolves the singularities of the space of phylogenetic trees. In this draft of a sequel,
we construct a related (stacky)
resolution of a space of real quadratic forms, and suggest,
perhaps without much justification, that systems of oscillators parametrized by such objects may may provide
useful models in genomics. 
\end{abstract}

\subjclass[2000]{14H10, 92B10, 16E50}

\keywords{phylogenetics, configuration spaces, associahedron, tree spaces}

\maketitle

\baselineskip=17pt

%
%
\section{Introduction} 
\label{s:intro}

One of the vexing problems in theoretical biology is the relation between genotype (easily measurable by
DNA sequencing) and phenotype (less easily defined, or measured): this resembles old questions in quantum
mechanics about observables and hidden variables. It may be naive, but nevertheless worthwhile, to suggest
that there may be interesting connections between models of evolution based on resonances in systems of linked
oscillators, and the inverse problem of reconstructing evolutionary trees by dissimilarity matrix 
techniques.\footnote{For example, one might hope for a metric version of the topological stability theorem of \cite{lp}.}
In the language proposed here, this becomes a question about maps between moduli spaces of quadratic forms.

In previous work, we have constructed a manifold resolution of the space $\iBHV{n}$ of phylogenetic 
trees introduced for classification problems in big data, such as in genomics. Our resolution $\OM{n+1}$
adapts a construction from algebraic geometry of a moduli space for configurations of points on
the real line (up to projective equivalence), building on a duality between cubical and associahedral
tessellations of certain hyperbolic manifolds. We refer to \cite[\S 5]{dm} for the details of its 
construction.  

This follow-up paper suggests applications of Part I to the study of configurations of eigenvalues of 
real symmetric matrices.  In particular, we construct an \emph{orbihedral} \cite{ah} cover $\tsQ^*_n$
of a space $\cQ^*_n$ of such (suitably normalized) matrices, whose elements can be interpreted as
matrices with \emph{labelled} 
eigenvalues. In physics, eigenvalues of symmetric matrices often 
represent fundamental frequencies of mechanical systems, such as spectral lines.  
We propose here to think of these labelled eigenvalues
as marking systems of coupled oscillators which retain
some integrity over time, organized as loci on a circle;
such systems abound in genetics, eg.\ in mitochondria or
in bacteria such as \emph{E.\ coli}. Figure~\ref{f:circos} is a
representative example.

\begin{figure} [h]
\includegraphics[width=.7\textwidth]{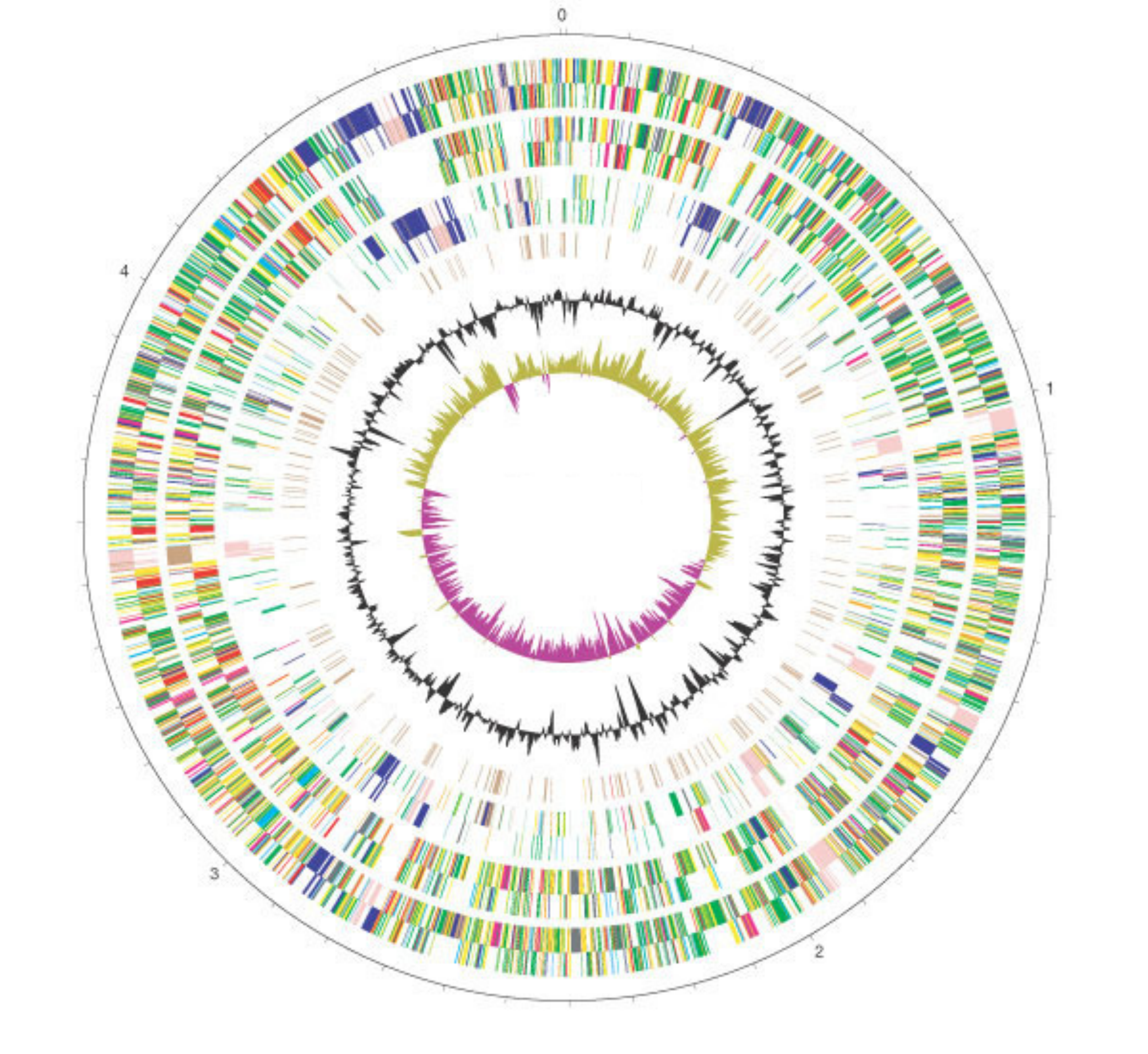}
\caption{Complete genome sequence of a multiple drug resistant \emph{Salmonella enterica}; used with permission from \cite{sal}.}
\label{f:circos}
\end{figure}

Section~\ref{s:quad} begins with an overview of the space of quadratic forms, whereas Section~\ref{s:braid} recalls the relationship between configuration spaces of points on lines, the braid hyperplane arrangement, and the real moduli space of curves $\OM{n+1}$.  The heart of the paper is found in Sections~\ref{s:orb} and~\ref{s:eigen}, constructing a functor of topological groupoids relating these moduli spaces to normalized spaces of forms.   In particular, the homotopy groups are discussed, with relations to the braid and cactus groups.  Section~\ref{s:ps} closes with some biological observations.

The reader should be aware, however, that this document is a rough blueprint, which does little
more than present the definition of our blowup of the space of quadratic forms, and sketch some
of its properties. In the words of Michael Barratt, we are concerned here to create the rockpile from which the diamonds are later to be extracted. 

\begin{ack}
We reiterate our thanks detailed in Part I to the many people  and institutions who have supported
us, and this research. JM also wishes to thank Charles Epstein \cite{eps} for the suggestion that 
biological speciation is analogous to a kind of resonance. 
\end{ack}

%
%
\section{Spaces of Quadratic Forms}  \label{s:quad}

Let $\cQ_n$ be the space of real symmetric $n \times n$ matrices, equivalently, of self-adjoint operators, or quadratic forms in $n$ variables.  Among other things, it parametrizes systems of coupled harmonic oscillators, and is thus of fundamental importance in mathematical mechanics. Forty years ago, V.I.\ Arnol'd observed \cite{arn} that this (contractible) space has a very interesting stratification defined by eigenvalue multiplicities, and he remarked its relevance in applications to phenomena involving resonance. 

The geometry of spaces of matrices with conditions on their eigenvalues is complicated. Arnol'd's stratification is essentially that 
defined by the action of the orthogonal group $\Ot(n)$ of isometries on $\cQ_n$ by conjugation; quadratic forms are classified up to 
isomorphism by elements of the resulting quotient. Since
$$\dim \cQ_n - \dim \Ot(n) \ = \ \binom{n+1}{2} - \binom{n}{2} \ = \ n \;,$$
it is natural to think of an equivalence class as indexed by its \emph{unordered} configuration
\[
\sum n_i \, \{x_i\} \ \in \ \SP^n(\R) 
\]
of eigenvalues $\{x_i\}$ (counted with multiplicity) in the symmetric product $\SP^n(\R) : = \R^n/\Sg_n$. A form 
in such a class has isotropy group 
\[
\Ot(n) \supset \Ot(\bn) \; := \ \prod \; \Ot(n_i) 
\]
indexed by the partition $\bn := n_1 + \dots + n_r = \sum i \nu_i$ of $n$, where $\nu_i$ is the number of parts
with $i$ elements. The forms whose eigenvalue configurations have $r$ parts define an $r$-dimensional family, so 
the stratum indexed by $\bn$ has codimension
\[
a(\bn) \;=\; \half n(n+1) - \big[r + \half (n(n-1) - \sum n_i(n_i-1))\big] \; = \; \half \sum (i+2)(i-1)\nu_i 
\]
\cite[\S 1]{arn}. For example, there is a one-dimensional family of forms with all eigenvalues equal: it is the
subspace $\R \cdot \B$ of multiples of the identity matrix.

This function $a(\bn)$ being quadratic implies that the orbit stratification of the topological groupoid
\[
\big[\cQ_n/\Ot(n)\big]
\]
is rather singular. 
Regarding the eigenvalue configuration of a form as an element of a configuration space (compactified as in 
Part I) suggests the possible existence of a kindler, gentler stratification.

%
%
\section{The Braid Arrangement} \label{s:braid}

The symmetric group  $\Sg_n$ on $n$ letters is a finite reflection group acting on $\R^{n}$, where transpositions $(ij)$ act as reflections across the hyperplanes $\{x_i = x_j\}$.  These hyperplanes form the \emph{braid arrangement}.
Since the subspace spanned by $\la 1, \cdots, 1 \ra$ is fixed under the group action, the \emph{essential} subspace is the 
reduced regular representation of $\Sg_n$: the hyperplane $V^{n-1}$ given by $\sum x_i = 0$.
The braid arrangement decomposes the $(n-2)$-sphere $\SV{n-2}$ in $V^{n-1}$ into $n!$ simplicial chambers.   Figure~\ref{f:svpvm05}(a) shows the example of the 2-sphere $\SV{2}$ tiled by 24 simplices cut up by the braid arrangement.  Here, the symmetric group $\Sg_4$ is a Coxeter group where every conjugate of a generator $s_i$ acts on the sphere as a reflection in some hyperplane.  The tiled sphere $\SV{n}$ is then the \emph{Coxeter complex}.  In combinatorial terms, it is the barycentric subdivision of the boundary of the $(n-1)$-simplex. 

\begin{figure} [h]
\includegraphics {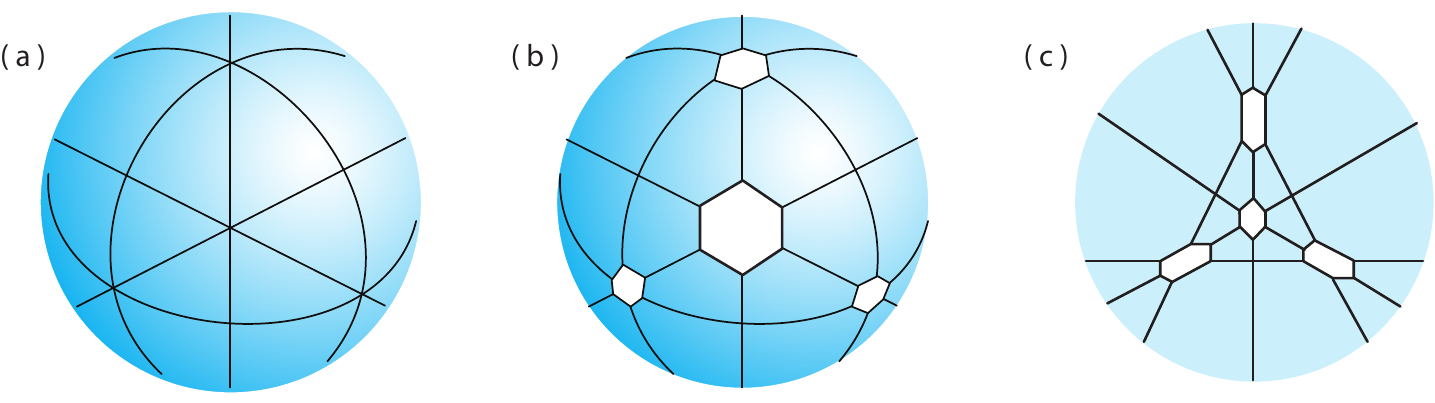}
\caption{(a) The Coxeter complex $\SV{2}$, along with (b) blowups resulting in Karpanov's nonorientable cover (c) of the real moduli space \M{5}.}
\label{f:svpvm05}
\end{figure}

Let $\Conf^n(\R)$ be the space of configurations of $n$ distinct labelled points in the line.  The contractible group 
$$\aff \; := \: \{ x \mapsto ax + b \suchthat a > 0, \ b \in \R \}$$ of orientation-preserving affine transformations of $\R$ 
acts freely on $\Conf^n(\R)$.  Translating the leftmost point to $0$, and rescaling so the rightmost point is $1$, defines an
identification of the quotient with the space of $(n-2)$ distinct labelled points in $(0,1)$.  If $(\tx_i)$ denotes the rearrangement 
of the (distinct) components of the vector $\x := (x_1, \ldots , x_n) \in \Conf^n(\R)$ in increasing order, then 
\[
(x_i) \ \mapsto \ \Big(t_k = \frac{\tx_{k+1} - \tx_k}{\tx_n - \tx_1}\Big)
\]
(so $t_k > 0$ and $\sum t_k = 1$) maps the equivalence class of $\x$ to a point of the interior of an $(n-2)$-simplex $\Delta_{n-2}$.
Thus each open simplicial chamber of the Coxeter complex \SV{n-2} corresponds to a permutation of labels for the $n$ points on the line. 
Allowing particles in $\Conf^n(\R) / \aff$ to collide completes the space to $\SV{n}$;  Figure~\ref{f:s2k4}(a) depicts a simplicial 
chamber of $\SV{2}$, along with a labeling of vertices and edges.

\begin{thm} \cite[\S 4]{dev2}
\label{t:config}
The Coxeter complex $\SV{n-2}$ can be identified with the na\"{i}ve compactification of $\Conf^{n}(\R) / \aff$.
\end{thm}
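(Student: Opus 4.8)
The plan is to produce an explicit homeomorphism, using centroid normalization in place of the leftmost/rightmost normalization of the excerpt (we assume $n \ge 2$, so that $\R\cdot\B$ is disjoint from $\Conf^n(\R)$). For $\x = (x_1, \dots, x_n)$ write $\mu(\x) = \tfrac1n \sum_i x_i$ and consider
\[
\nu \colon \R^n \setminus \R\cdot\B \ \longrightarrow\ \SV{n-2}, \qquad \x \ \longmapsto\ \frac{\x - \mu(\x)\B}{\|\x - \mu(\x)\B\|},
\]
which is well defined because $\x \notin \R\cdot\B$ forces $\x - \mu(\x)\B \ne 0$ while $\sum_i(x_i - \mu(\x)) = 0$. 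The following elementary facts yield the homeomorphism: (i) $\nu$ is continuous; (ii) $\nu$ is $\aff$-invariant, a translation leaving $\x - \mu(\x)\B$ unchanged and a positive dilation scaling it by a positive factor that the normalization discards; (iii) the fibre of $\nu$ over a point is exactly an $\aff$-orbit, since $\nu(\x) = \nu({\bf y})$ forces $\x - \mu(\x)\B = c\,({\bf y} - \mu({\bf y})\B)$ with $c > 0$, hence $\x = g \cdot {\bf y}$ for the affine map $g(t) = ct + (\mu(\x) - c\,\mu({\bf y})) \in \aff$; and (iv) $\nu$ restricts to the identity on the unit sphere $\SV{n-2} \subset \R^n \setminus \R\cdot\B$ (there $\mu = 0$ and the norm is $1$), so $\nu$ is onto. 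By (ii)--(iv) the map $\nu$ descends to a continuous bijection $\bar\nu \colon (\R^n \setminus \R\cdot\B)/\aff \to \SV{n-2}$ whose inverse is the continuous composite $\SV{n-2} \hookrightarrow \R^n \setminus \R\cdot\B \twoheadrightarrow (\R^n \setminus \R\cdot\B)/\aff$; hence $\bar\nu$ is a homeomorphism.

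Next I would recognize the source of $\bar\nu$ as the na\"{\i}ve compactification of $\Conf^n(\R)/\aff$. The group $\aff$ acts freely on $\R^n \setminus \R\cdot\B$, in which $\Conf^n(\R)$ is an open dense invariant subset, and an $\aff$-orbit in $\R^n \setminus \R\cdot\B$ is precisely a labelled $n$-point multiset on the line that is not a single repeated point, taken up to orientation-preserving affine change of coordinate --- that is, a configuration of $n$ labelled points in $\R$ with collisions allowed but total collapse forbidden, which is exactly what ``allowing the particles of $\Conf^n(\R)/\aff$ to collide'' yields. On the dense open subset $\Conf^n(\R)/\aff$, which by the discussion preceding the theorem is a disjoint union of $n!$ open $(n-2)$-simplices (one per ordering, via $(x_i) \mapsto (t_k)$), the map $\bar\nu$ simply sends the simplex of an ordering $\sigma$ onto the open Coxeter chamber $\{v_{\sigma(1)} < \dots < v_{\sigma(n)}\}$.

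Finally I would match the two stratifications cell by cell. Both the faces of the braid fan met with $\SV{n-2}$ and the collision types of labelled $n$-point configurations are indexed by ordered set partitions $B_1 < \dots < B_k$ of $\{1,\dots,n\}$ with $k \ge 2$: on the sphere, $\{\, v_i = v_j$ for $i,j$ in a common block and $v_i < v_j$ for $i \in B_a$, $j \in B_b$, $a < b \,\}$ meets $\SV{n-2}$ in a relatively open $(k-2)$-cell; on the configuration side, such a type is given by its $k$ cluster positions $y_1 < \dots < y_k$ modulo $\aff$, again a $(k-2)$-cell, and the incidence relations agree, a cell of type $P$ lying in the closure of one of type $P'$ exactly when $P$ coarsens $P'$. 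One checks that $\bar\nu$ carries the cell of a given type homeomorphically onto the cell of the same type, so it identifies the na\"{\i}ve compactification, stratified by collision pattern, with $\SV{n-2}$ realized as the barycentric subdivision of $\partial\Delta_{n-1}$ recalled in Section~\ref{s:braid}. The point that wants genuine care --- and the main obstacle --- is entirely point-set: one must check that whatever topology is placed on the ``na\"{\i}ve compactification'' (the orbit space above, or the quotient of $\bigsqcup_{\sigma \in \Sg_n} \overline{\Delta}_{n-2}$ by the gluing that records when two boundary configurations agree) coincides with the subspace topology inherited from $\SV{n-2}$; since $\bigsqcup_\sigma \overline{\Delta}_{n-2}$ is compact and $\SV{n-2}$ Hausdorff, this follows as soon as one verifies that the evident map from the glued simplices onto $\SV{n-2}$ is a continuous surjection with exactly the expected fibres.
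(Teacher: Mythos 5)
Your argument is correct, and it is worth noting that the paper itself offers no proof of this statement: it is imported from \cite[\S 4]{dev2}, and the surrounding text only sketches the identification via the leftmost/rightmost normalization $(x_i) \mapsto (t_k)$, which sends each ordering of the points to the interior of a copy of $\Delta_{n-2}$ and then assembles the $n!$ closed simplices into the Coxeter complex by gluing along collision faces. Your route is genuinely different and, for this particular target, cleaner: centroid normalization lands directly on the unit sphere of the essential subspace $V^{n-1}$, is manifestly $\Sg_n$-equivariant, and exhibits $\SV{n-2}$ as the honest orbit space $(\R^n \setminus \R\cdot\B)/\aff$ in one step, with the chamber decomposition recovered afterwards from the braid fan. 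What the paper's normalization buys instead is an explicit barycentric/simplicial coordinate system ($t_k > 0$, $\sum t_k = 1$) on each chamber, which is what the later blowup-to-associahedra discussion actually uses. Your final paragraph correctly isolates the only delicate point, namely that the quotient topology on the glued simplices agrees with the subspace topology from the sphere; the compact-to-Hausdorff argument you invoke does settle it once the fibres are checked, and your indexing of strata by ordered set partitions with at least two blocks matches the standard description of the braid arrangement's face poset, so the cell-by-cell comparison goes through.
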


\begin{figure} [h]
\includegraphics{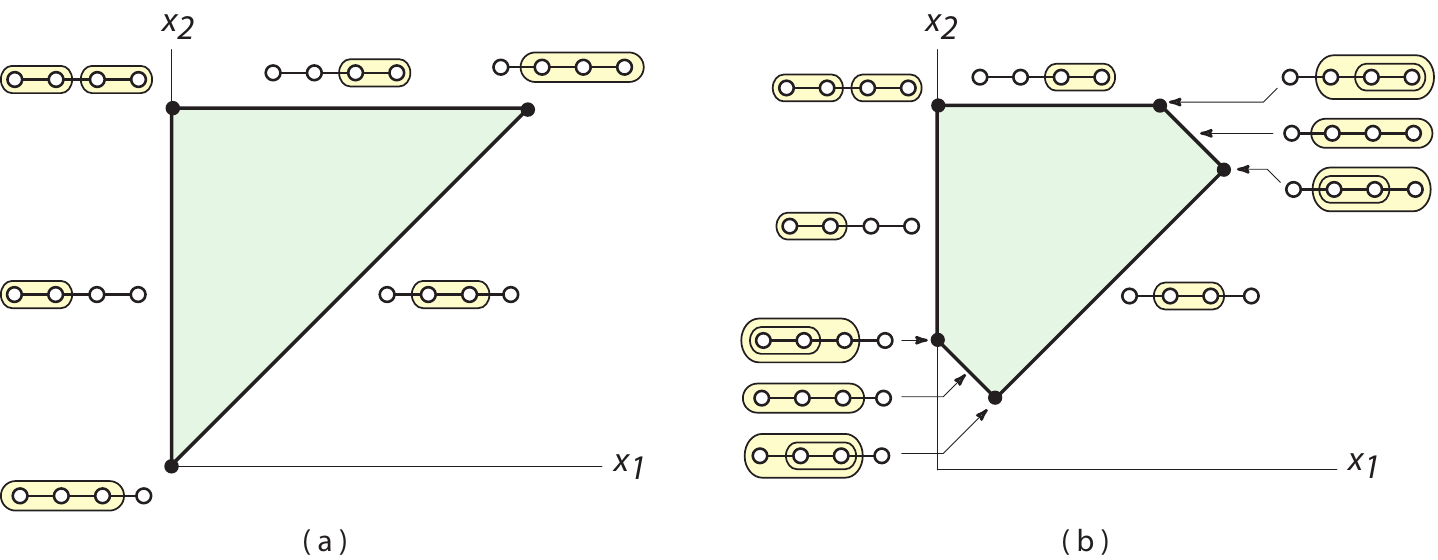}
\caption{(a) Labeling of a chamber of $\SV{2}$ and (b) the associahedron $K_4$.}
\label{f:s2k4}
\end{figure}

Kapranov \cite{kap} shows a beautiful relationship between the real moduli space of curves and the Coxeter complex of type $A$. 
Regarding the last of the $(n+1)$ points on the projective line as the point at infinity, we have:

\begin{thm} \cite[\S 4]{dev2} 
\label{t:orient}
The  moduli space \oOM{n+1} is isomorphic to $\Conf^{n}(\R) / \aff$.
\end{thm}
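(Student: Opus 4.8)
The plan is to turn the phrase ``regard the last of the $n+1$ points as the point at infinity'' into an explicit isomorphism. Recall that \oOM{n+1} is the quotient of the configuration space $\Conf^{n+1}(\RP)$ of $n+1$ distinct labelled points on $\RP$ by the orientation-preserving M\"obius group $\PSL$; passing from the full group $\PGL$ (whose second component acts on $\RP\cong\Sg^1$ as an orientation reversal) to this index-two subgroup is precisely what replaces \M{n+1} by its orientable double cover. Two elementary facts drive everything: $\PSL$ acts transitively on $\RP$ (already the rotation circle inside it does), and the stabilizer of $\infty$ is exactly $\aff$. The latter is a one-line computation --- $z\mapsto(az+b)/(cz+d)$ fixes $\infty$ iff $c=0$, and after normalizing $ad=1$ it becomes $z\mapsto a^2z+ab$, whose leading coefficient $a^2$ sweeps out all positive reals as $(a,b)$ ranges over $\R^\times\times\R$.

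Given this, the inclusion $\Phi\colon\Conf^n(\R)\hookrightarrow\Conf^{n+1}(\RP)$, $(x_1,\dots,x_n)\mapsto(x_1,\dots,x_n,\infty)$, is equivariant for the embedding $\aff\hookrightarrow\PSL$ as the stabilizer of $\infty$, hence descends to
\[
\bar\Phi\colon\Conf^{n}(\R)/\aff\longrightarrow\Conf^{n+1}(\RP)/\PSL=\oOM{n+1}.
\]
Surjectivity is transitivity: for a configuration $(x_i)_{i\le n+1}$ choose $g\in\PSL$ with $g(x_{n+1})=\infty$; then $g$ sends its class to that of $(gx_1,\dots,gx_n,\infty)$, whose first $n$ entries are distinct and finite because $g$ is injective and $x_i\neq x_{n+1}$ for $i\le n$. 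Injectivity is where sending a marked point to $\infty$ pays off: if $g\in\PSL$ carries $(x_1,\dots,x_n,\infty)$ to $(x_1',\dots,x_n',\infty)$ then $g$ fixes $\infty$, so $g\in\aff$ and $(x_i),(x_i')$ already lie in one $\aff$-orbit.

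It remains to upgrade the bijection $\bar\Phi$ to an isomorphism in whatever category is intended (real-analytic manifolds, or the $\R$-points of a variety). For $n+1\ge3$ the action of $\aff$ on $\Conf^n(\R)$ and that of $\PSL$ on $\Conf^{n+1}(\RP)$ are free and proper, so both quotient maps are principal bundles; the image of $\Phi$ --- the locus where the last point equals $\infty$ --- is a slice for the $\PSL$-action meeting each orbit in exactly one $\aff$-orbit and transverse to the remaining orbit direction, which makes $\bar\Phi$ a local isomorphism and hence, being bijective, a global one. Equivalently, one writes down a regular inverse directly, normalizing a configuration by the unique element of $\PSL$ that carries a chosen (orientation-appropriate) triple of its points to a standard triple.

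I expect the bijection to be essentially formal once the stabilizer computation is in hand; the care goes into the bookkeeping --- making precise which category ``isomorphic'' refers to and matching structures across $\bar\Phi$, and keeping straight that it is the $\PSL$, not the $\PGL$, quotient that realizes the orientable cover. Checking freeness and properness of the two actions, needed even to know the quotients are manifolds, is routine for three or more points but worth stating.
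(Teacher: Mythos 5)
Your argument is correct and is exactly the expansion of the approach the paper indicates: the theorem is quoted from \cite[\S 4]{dev2} with only the remark that the last of the $n+1$ points is sent to infinity, and your identification of $\aff$ as the stabilizer of $\infty$ in $\PSL$ together with the orbit--slice argument is the standard proof of that statement. No gaps; the only care needed is the one you already flag, namely that the quotient by $\PSL$ rather than $\PGL$ is what yields the orientable cover $\oOM{n+1}$.
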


\noindent
Kapranov goes on to show that certain blowups of the sphere $\SV{n-2}$ result in a nonorientable double cover of \M{n+1}. The blowups
truncate each simplex tiling $\SV{n-2}$ to an associahedron $K_n$.  Gluing these associahedra under the cotwist  
operation \cite[\S 5]{dm} results in the orientable double cover \OM{n+1}, which can be viewed as a compactification of $\Conf^{n}(\R) / \aff$.

\begin{exmp}
 Figure~\ref{f:svpvm05}(b) shows blowups of $\SV{2}$, and a quotient by the antipodal map (c) resulting in \M{5}.  Compare this with the moduli space constructed from a torus in \cite[Figure 9(a)]{dm}; both are homeomorphic spaces with an identical tiling by 12 associahedra $K_4$.  Figure~\ref{f:s2k4}(b) displays the case of $K_4$ obtained from blowups of part (a).  
\end{exmp}

%
%
\section{Orbifolds and Groupoids} \label{s:orb}
\subsection{}

The group $\aff$ of affine transformations of $\R$ acts freely and compatibly with the action of the orthogonal 
group by conjugation on $\cQ_n$. The affine orbit $\R \cdot \B$ of 0 is thus invariant, and it will be convenient 
to work with the space
\[
\cQ_n^* \; := \; (\cQ_n - \R \cdot \B)/\aff
\]
of equivalence classes of quadratic forms which are nondegenerate, in the sense of having at least two distinct 
eigenvalues. Without loss of generality, we may assume that these distinct eigenvalues are 0 and 1, and that any
remaining eigenvalues lie between them. It follows from Theorem~\ref{t:config} that
\begin{cor}
We can identify $\cQ_n^*$ with a sphere of dimension $\half n(n+1) - 2$.
\end{cor}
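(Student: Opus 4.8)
The plan is to treat this as a purely linear computation, the same mechanism that underlies Theorem~\ref{t:config}, now carried out on the full space of symmetric matrices rather than on the reflection representation. First I would identify $\cQ_n$ with a Euclidean space $\R^N$, $N = \binom{n+1}{2} = \half n(n+1)$, by recording matrix entries; under this identification $\R\cdot\B$ is a line through the origin and the $\aff$-action becomes $Q \mapsto aQ + b\B$ with $a>0$ and $b\in\R$, i.e.\ the composite of translation along the line $\R\cdot\B$ with a positive homothety. In particular the $\aff$-orbit of $0$ is exactly $\R\cdot\B$, which is why the action is free on the complement $\cQ_n - \R\cdot\B$.

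The next step is to separate the translation and scaling parts of the action. Let $W = \{Q \in \cQ_n : {\rm tr}(Q) = 0\}$, a linear complement to $\R\cdot\B$ of dimension $N-1$. Every orbit of the translation subgroup $\{Q \mapsto Q + b\B\}$ meets $W$ in exactly one point, found by solving ${\rm tr}(Q + b\B) = 0$ for $b$, and the orbit $\R\cdot\B$ is precisely the one meeting $W$ at $0$. Hence $\cQ_n - \R\cdot\B$ modulo translations is $W \setminus \{0\}$, on which the residual homothety $Q \mapsto aQ$, $a>0$, acts by ordinary scaling, with quotient the unit sphere $S(W) \cong S^{N-2}$. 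The explicit two-step slice, first ${\rm tr}(Q)=0$ and then unit norm, shows the full $\aff$-action to be free and proper, so the resulting continuous bijection $\cQ_n^* \to S(W)$ is a homeomorphism; thus $\cQ_n^* \cong S^{N-2}$ with $N-2 = \half n(n+1) - 2$.

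Finally I would record the parallel with Theorem~\ref{t:config}: there $\aff$ acts on $\R^n$ by $\x \mapsto a\x + b\B$, and after passing to the essential subspace $V^{n-1}$ and filling in the braid hyperplanes one recovers the sphere $\SV{n-2}$; here no compactification is needed, because deleting $\R\cdot\B$ already removes precisely the locus that the translation quotient would otherwise collapse to the origin. I do not expect a genuine obstacle. The one point to handle with care is that the normalization noted just above the corollary --- rescaling a form so that its extreme eigenvalues are $0$ and $1$ --- provides only a convenient representative, not a global chart (the eigenvalue configuration is unordered); basing the identification on the trace-zero/unit-norm slice instead avoids this and keeps the homeomorphism transparent.
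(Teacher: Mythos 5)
Your argument is correct, and it is essentially the argument the paper intends: the paper's one-line justification (``it follows from Theorem~\ref{t:config}'') is pointing at exactly the mechanism you isolate, namely that $\aff$ acts freely and properly on a Euclidean space minus the invariant line $\R\cdot\B$, with quotient a sphere of dimension two less. The only difference is the choice of slice: the paper gestures at the eigenvalue normalization (extreme eigenvalues $0$ and $1$), while you use the trace-zero, unit-norm slice in $\cQ_n\cong\R^{\half n(n+1)}$; your choice is the cleaner one, since it makes the homeomorphism with the round sphere $S^{\half n(n+1)-2}$ immediate and, as you note, sidesteps the fact that the eigenvalue normalization is not a global chart. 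Your closing remark that no compactification is needed here (unlike in Theorem~\ref{t:config}, where the collision locus must be filled in) is also the right thing to flag.
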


A point $\x := (x_1, \ldots , x_n) \in \Conf^n(\R)$ defines a diagonal matrix $X^i_k = x_i \delta^i_k$,
\[
X \;=\;
\left(
 \begin{array}{ccccc}
    x_1  & & & & \text{\huge0}\\
    & \cdot  \\
    &  & & \cdot \\
    \text{\huge0} & & & & x_n
 \end{array}
\right)
\]
and hence an embedding 
\begin{equation}
\Conf^n(\R) \ \to \ \cQ_n \ : \ \x \mapsto X
\label{e:keymap}
\end{equation}
in the space of quadratic forms on $\R^n$.

\begin{thm}
The construction above defines a smooth embedding 
\begin{equation}
\oOM{n+1} \ \simeq \ \Sg_n \times \iDelta_{n-2} \ \to \ \cQ^*_n \;,
\label{e:mtoq}
\end{equation}
equivariant with respect to the inclusion $\Sg_n \to \Ot(n)$ of the symmetric group
as permutations of the coordinate axes. 
\end{thm}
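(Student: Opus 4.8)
The statement packages two identifications of the source already established in the text --- $\oOM{n+1}\simeq\Conf^n(\R)/\aff$ from Theorem~\ref{t:orient}, and $\Conf^n(\R)/\aff\simeq\Sg_n\times\iDelta_{n-2}$ from the chamber-plus-barycentric description preceding Theorem~\ref{t:config} --- together with the claim that \eqref{e:keymap} descends to a smooth $\Sg_n$-equivariant embedding, so the real work is in the map itself. First I would pin down the $\aff$-action on $\cQ_n$ that is compatible with $x\mapsto ax+b$ on $\R$: it is the spectral action $Q\mapsto aQ+bI$, which fixes $\R\cdot\B$, commutes with conjugation by $\Ot(n)$ (which fixes $I$), is free off $\R\cdot\B$, and makes \eqref{e:keymap} equivariant, since the diagonal of $aX+bI$ is $(ax_i+b)$. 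For $n\geq 2$ a configuration of $n$ distinct points gives a non-scalar $X$, so \eqref{e:keymap} lands in $\cQ_n-\R\cdot\B$ and, being equivariant, descends to the map $\Conf^n(\R)/\aff\to\cQ^*_n$ of \eqref{e:mtoq}.

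Next I would make the map explicit using the normalization in the text: every $\aff$-orbit in $\cQ_n-\R\cdot\B$ has a unique representative with smallest eigenvalue $0$ and largest eigenvalue $1$ (take $a=1/(\max x_i-\min x_i)$ and $b=-a\min x_i$), identifying $\cQ^*_n$ with that normalized slice, and --- via the argument behind the preceding Corollary --- with the unit sphere in the traceless symmetric matrices, which fixes its smooth structure. Under the chamber description a point of $\Sg_n\times\iDelta_{n-2}$ is a permutation $\sigma$ together with $(t_1,\dots,t_{n-1})$ with $t_k>0$ and $\sum t_k=1$, and \eqref{e:mtoq} sends it to the diagonal matrix whose $\sigma(k)$-th entry is the partial sum $t_1+\dots+t_{k-1}$. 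This is polynomial in the $t_k$, hence smooth; it is injective, the inverse recovering $\sigma$ from the order of the diagonal entries and the $t_k$ from their consecutive differences; and since that inverse is continuous, the map is a homeomorphism onto its image, the set of normalized diagonal matrices with distinct eigenvalues (a submanifold of $\cQ^*_n$).

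It remains to check the differential is injective, and equivariance. The linear diagonal embedding $\R^n\hookrightarrow\cQ_n$ carries the infinitesimal $\aff$-orbit directions $\B$ and $\x$ at $\x$ to the infinitesimal $\aff$-orbit directions $I$ and $X$ at $X$, so it descends to an injection of quotient tangent spaces $\R^n/\la\B,\x\ra\hookrightarrow T_{X}\cQ_n/\la I,X\ra=T_{[X]}\cQ^*_n$; hence \eqref{e:mtoq} is an immersion, and combined with the previous paragraph a smooth embedding. For equivariance, $\Sg_n$ permutes the labels of a configuration, while its image in $\Ot(n)$ --- the permutation matrices $P_\sigma$ --- acts on $\cQ_n$ by conjugation, and $P_\sigma X P_\sigma^{-1}$ is the diagonal matrix with entries $(x_{\sigma^{-1}(i)})$, i.e. the diagonal matrix of the relabelled configuration; thus \eqref{e:keymap} is $\Sg_n$-equivariant, and since $P_\sigma I P_\sigma^{-1}=I$ the $\Sg_n$- and $\aff$-actions on $\cQ_n$ commute, so equivariance passes to the quotient.

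The main thing to watch is that $\oOM{n+1}$ is \emph{non-compact} --- only distinct eigenvalues occur --- so ``injective immersion'' does not automatically upgrade to ``embedding''; this is exactly why I would route the argument through the explicit normalized-diagonal model, which hands back a continuous inverse directly. A secondary subtlety deserving care is compatibility of smooth structures: the normalized slice $\{\lambda_{\min}=0,\ \lambda_{\max}=1\}$ is a smooth submanifold of $\cQ_n$ only on the locus where the extreme eigenvalues are simple (which contains the image of \eqref{e:mtoq}), so the smooth-structure claim is best phrased using the traceless-unit-sphere model of $\cQ^*_n$ from the Corollary and then checking the two models agree on that locus.
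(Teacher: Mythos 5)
Your proof follows the same route as the paper's: the paper's entire argument is the one-line observation that the diagonal embedding \eqref{e:keymap} is equivariant for the affine group acting on both sides, so that the map descends to the quotients and the identification of the source follows from Theorem~\ref{t:orient}. Your write-up is correct and simply supplies the details the paper leaves implicit --- the explicit spectral action $Q\mapsto aQ+b\B$, the normalized-diagonal and traceless-sphere models of $\cQ^*_n$, injectivity of the differential on the quotient tangent spaces, and the continuous inverse needed to upgrade the injective immersion to an embedding on the non-compact source.
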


\begin{proof}
Since the embedding in Eq.~\eqref{e:keymap} is equivariant under the obvious action on either side of the affine group, 
the assertion follows from Theorem~\ref{t:orient}.
\end{proof}

\subsection{}

A smooth action of a Lie group $G$ on a smooth manifold $X$ defines 
a topological category (or transformation groupoid) $[X/G]$, with $X$ as space of objects
and $G \times X$ as space of morphisms. In particular, if $x_0,x_1 \in X$, then 
\[
{\rm Maps}_{[X/G]}(x_0,x_1) \ = \ \{g \in G \:|\: gx_0 = x_1 \} 
\]
is the space of maps from $x_0$ to $x_1$. If, for example, $G$ is finite, then $[X/G]$ is an 
example of an orbifold. The geometrical realization $|[X/G]|$ of $[X/G]$, defined in terms 
of its totalization as a simplicial space, is the classifying space of the topological category; this maps
to the classical quotient $X/G$ by the homotopy-to-geometric quotient map
\[
|[X/G]| \ = \ X \times_G EG  \ \to \ X \times_G {\rm pt} \ = \ X/G \;.
\]
The construction above, with the action of Lie group $\Ot(n)$ on the smooth manifold $\cQ^*_n$, then defines a functor
\[
\big[\oOM{n+1}/\Sg_n\big] \ \to \ \big[\cQ^*_n/\Ot(n)\big]
\]
of topological groupoids; the induced map on quotients is the inclusion
$\iDelta_{n-2} \subset \Delta_{n-2}$. Compactifying as in \cite[\S 4]{dm} defines
a morphism
\[
\gM{n+1} \ := \ \big[\OM{n+1}/\Sg_n\big] \ \to \ \big[\cQ^*_n/\Ot(n)\big]
\]
which descends to a homeomorphism of either quotient space with the simplex $\Delta_{n-2}$ of normalized
eigenvalue configurations. The commutative diagram
\[
\xymatrix{
\Sg_n \times {\iDelta}_{n-2} \ \ar@{->}[d] \ar[r]^{\simeq \ \ \ }& \ \Conf^n(\R)/\aff \ar@{->}[d]\\
\partial \Delta_{n-1} \simeq \Sg V^{n-2} \ \ar@{>}[r] & \ \OM{n+1}/\Sg_n}
\]
defines a factorization
\[
\xymatrix{
\gM{n+1} := \big[\OM{n+1}/\Sg_n\big] \ar[r] & \big[\Sg V^{n-2}/\Ot(n)\big] \ar@{.>}[r] & \big[\cQ^*_n/\Ot(n)\big]}
\]
of the map above through the Coxeter complex $\Sg V^{n-2}$.

\subsection{}

As noted in \cite[\S 5]{dm}, the compactified moduli spaces \OM{n+1} have many formal similarities to the classical braid
spaces \cite[\S 4]{bv}: for example, both have an interesting $\Sg_n$-action. In the classical case this action is free, and 
its quotient space is a classifying space for the braid group ${\rm Br}_n$
on $n$ strands, defining an extension
\[
1 \ \to \ P_n :=  \pi_1(\Conf^n(\C)) \ \to \ {\rm Br}_n \ \to \ \Sg_n \ \to \ 1
\]
of fundamental groups (where $P_n$ is the group of $n$-strand pure braids).

Thurston \cite{th, md}
defined a notion of universal cover for orbifolds such as $\gM{n+1}$, and showed that
a suitable orbifold fundamental group (isomorphic to $\pi_1|[X/G]|$ when $G$ is finite) acts by
deck-transformations, generalizing the situation for topological spaces. In our case the resulting 
universal cover $\tgM{n+1}$ (which inherits a Gromov hyperbolic metric from the moduli space) has an 
action of the orbifold fundamental group
\[
1 \ \to \ \pi_1\big(\OM{n+1}\big) \ \to \ \pi_1^\orb \big(\gM{n+1}\big) := J^*_n \ \to \ \Sg_n \ \to \ 1 \;,
\]
closely related to the \emph{cactus group} of \cite{hk}.
For example, from our point of view $\tgM{5}$ (together with its $J_5$-action) is an orbifold analog 
of the classical upper half-plane\footnote{But at the time of writing, we
are unsure if $\tgM{n}$ is contractible or not.} with its canonical $\PGl_2(\Z)$-action.

%
%
\section{Quadratic forms with labelled eigenvalues} \label{s:eigen}
\subsection{}

The fiber product
\[
\xymatrix{
\cA \times_\cC \cB \ar@{.>}[d] \ar@{.>}[r] & \cB \ar[d]^G \\
\cA \ar[r]^F & \cC}
\]
of categories has as objects, triples 
\[
\{(A,B,\phi: F(A) \cong G(B)) \ \vert \   A \in \cA, \; B \in \cB, \; \phi \in {\rm Mor} \cC \} \;;
\]
a morphism $(a,b) : (A_0,B_0,\phi_0) \to (A_1,B_1,\phi_1)$ is a pair of morphisms in $\cA,\cB$ such
that the diagram 
\[
\xymatrix{
F(A_0) \ar[d]^{F(a)} \ar[r]^{\phi_0} & G(B_0) \ar[d]^{G(b)} \\
F(A_1) \ar[r]^{\phi_1} & G(B_1) }
\]
commutes. The geometric realization $|\cA \times_\cC \cB|$ of such a fiber product groupoid is homotopy equivalent to the
homotopy fiber product 
\[
|\cA| \times_{|\cC|} |\cB| \ = \ \{(a,b) \in |\cA| \times |\cB|, \; \Phi : I \to |\cA| \times |\cB| \ \vert \ \Phi(0) = a, \Phi(1) = b \}
\]
of the corresponding geometric realizations, yielding an Eilenberg-Moore spectral sequence with 
\[
E^{*,*}_2 \ = \ {\rm Tor}^{H^*|\cC|}_*(H^*|\cA|,H^*|\cB|) 
\]
for the cohomology of the fiber product, and a van Kampen theorem 
\[
\pi_1|\cA \times_\cC \cB| \ \simeq \ \pi_1|\cA| \, *_{\pi_1|\cC|} \, \pi_1|\cB|
\]
for fundamental groups. 

\subsection{} 

The pullback groupoid
\[
\xymatrix{
\sQ^*_n \ar@{.>}[d] \ar@{.>}[r] & [\cQ^*_n/1] \ar[d] \\
{\gM{n+1} = \big[\OM{n+1}/\Sg_n\big]} \ar[r] & \big[\cQ^*_n/\Ot(n)\big]}
\]
thus has diagrams 
\[
\xymatrix{
Q \ar[r]^S & X}
\]
as objects.  Here, $Q \in \cQ^*_n$ is a (normalized) quadratic form, $X$ is the diagonal matrix
indexed by a configuration $\x$ of $\OM{n+1}$ with its $(n+1)$-st point at $\infty$, 
and $S$ is an orthogonal matrix such that $SQS^{-1} = X$.  This construction results in the following:

\begin{prop}
The objects of $\sQ^*_n$ 
lying above an associahedral cell $K_n \subset \OM{n+1}$ can thus be identified with a subspace of the product
\[
(\Ot(n)/\Ot(\bn)) \times \Ot(\bn) \times K_n
\]
satisfying $r$ equations over the codimension $r$ face of $K_n$, defined by configurations 
with $\bn$ of length $r$.
\end{prop}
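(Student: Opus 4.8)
The plan is to read off the objects of $\sQ^*_n$ directly from the pullback‑groupoid description. By the definition of the fiber product of categories recalled above, an object of $\sQ^*_n$ lying over a point $\x$ of the cell $K_n\subset\OM{n+1}$ is a triple $(Q,X,S)$ in which $X=X(\x)$ is the diagonal matrix attached to the (coarse) configuration $\x$, $Q\in\cQ^*_n$, and $S\in\Ot(n)$ satisfies $SQS^{-1}=X$. The first step is to notice that $Q=S^{-1}XS$ is then forced, and that conversely every pair $(\x,S)\in K_n\times\Ot(n)$ produces a legitimate object, since conjugation by $\Ot(n)$ carries the normalized form $X(\x)$ back into $\cQ^*_n$. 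So, before any refinement, the objects over $K_n$ are just $K_n\times\Ot(n)$, and the task is to re-parametrize the $\Ot(n)$-factor compatibly with the face stratification of $K_n$.

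For the second step, fix a codimension-$r$ face $F\subset K_n$ with generic eigenvalue pattern $\bn$, so that for $\x$ in the relative interior of $F$ the stabilizer of $X(\x)$ under conjugation is exactly $\Ot(\bn)=\prod_i\Ot(n_i)$. Over $F$ an orthogonal matrix $S$ with $SQS^{-1}=X$ carries two pieces of data: its class in $\Ot(n)/\Ot(\bn)$, which records the $\bn$-block decomposition of $\R^n$ into eigenspaces of $Q$, and a residual element of $\Ot(\bn)$ giving the choice of orthonormal frame inside those blocks; moreover $Q$ is a function of the first datum alone, since $Q$ depends on $S$ only modulo the centralizer $\Ot(\bn)$ of $X$. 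Choosing a local section of the principal $\Ot(\bn)$-bundle $\Ot(n)\to\Ot(n)/\Ot(\bn)$ turns this into an identification of (a neighborhood in) $\Ot(n)$ with a piece of $(\Ot(n)/\Ot(\bn))\times\Ot(\bn)$. Since $F$ is itself cut out inside $K_n$ by the $r$ collision conditions that impose the pattern $\bn$, the objects of $\sQ^*_n$ lying over $F$ are precisely the locus in $(\Ot(n)/\Ot(\bn))\times\Ot(\bn)\times K_n$ on which those $r$ equations hold, a subspace satisfying $r$ equations over the codimension-$r$ face, as asserted; setting $\bn=1^n$, so $\Ot(\bn)=\{\pm1\}^n$, recovers the open cell with no equations imposed.

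The one genuine subtlety, and the reason the statement reads ``a subspace of'' rather than ``the product'', is that the bundle $\Ot(n)\to\Ot(n)/\Ot(\bn)$ is in general nontrivial, so the product decomposition is only local, and one must check that the charts over the various faces of $K_n$, with their different nested subgroups $\Ot(\bn)$, glue coherently: concretely, that the $\{\pm1\}^n$ worth of sign data visible over the open cell refines in the right way to the $\Ot(\bn)$ worth of frame data as eigenvalues collide along a flag of faces. I expect this to be the main obstacle, and would treat it one facet at a time, where a single cluster collision enlarges $\Ot(\bn')\subset\Ot(\bn)$ by a block of rotations, and then iterate, the admissible flags being exactly the associahedral combinatorics of $K_n$ already set up in \cite[\S 5]{dm}. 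Tracking the signs of frames across a collision is precisely where the non‑orientability and the cotwist construction of \cite[\S 5]{dm} re-enter, so I would expect the bookkeeping there to mirror what was done in Part I.
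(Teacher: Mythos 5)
The paper offers no argument for this Proposition at all --- it is introduced by the single phrase ``this construction results in the following'' --- so there is no proof of record to compare against; your reconstruction supplies the argument the authors evidently intend. Your two main steps are correct and are the essential content: the objects over a configuration $\x \in K_n$ are identified with $\Ot(n)$ via $S \mapsto (S^{-1}X(\x)S,\, S)$, so the objects over the cell form $\Ot(n)\times K_n$; and over a face with eigenvalue pattern $\bn$ the conjugating matrix $S$ factors, through the principal $\Ot(\bn)$-bundle $\Ot(n)\to \Ot(n)/\Ot(\bn)$, into the eigenspace decomposition of $Q$ (which alone determines $Q$) and a residual frame in $\Ot(\bn)$. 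This is exactly the structure needed downstream for the isotropy computation with $N(\bn)$ and the ``constant relative dimension'' theorem, so the approach is the right one.

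The weak point is your treatment of the ``$r$ equations.'' Reading them as the $r$ face-defining equations on the $K_n$ factor makes the clause nearly vacuous: under that reading the ``subspace'' is the whole product restricted over the face, and since $\dim\big(\Ot(n)/\Ot(\bn)\big)+\dim \Ot(\bn)=\dim \Ot(n)$ for every $\bn$, no condition on the orthogonal factors is ever imposed. The only substantive reading is that the equations are the compatibility conditions between the coset in $\Ot(n)/\Ot(\bn)$ and the frame in $\Ot(\bn)$ required to reassemble an actual $S\in\Ot(n)$ --- which is precisely the local-triviality and face-gluing issue you correctly flag in your final paragraph and then defer. That deferral is where the real mathematical work sits (the bundle $\Ot(n)\to\Ot(n)/\Ot(\bn)$ is nontrivial, and charts over nested faces with different $\bn$ must be matched, which is where the cotwist of Part I should reappear), so your proof identifies the right obstacle without discharging it. Note also a bookkeeping mismatch: in Section~2 the paper uses $r$ for the number of parts of $\bn$, under which the corresponding face of the eigenvalue simplex has codimension $n-r$, not $r$; your reading (with $r$ counting collisions) is internally consistent but you should state explicitly which convention you are adopting, and that the Proposition is established only up to making the ``$r$ equations'' precise.
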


A morphism $(Q_0,S_0,X_0) \to (Q_1,S_1,X_1)$ is an element $\sigma \in \Sg_n$ such that
the diagram
\[
\xymatrix{
Q_0 \ar[d]^= \ar[r]^{S_0} & X_0 \ar[d]^\sigma \\
Q_1 \ar[r]^{S_1} & X_1 }
\]
commutes; in other words, conjugation by $S_1S_0^{-1}$ maps $X_0$ to $X_1$. The isotropy group of 
$(Q,S,X)$ is thus isomorphic to the commutant of $X$ in $\Ot(n)$, that is, to the product 
\[
N(\bn) \; := \; \prod N(n_i) 
\]
indexed by the partition $\bn$ defined by the repeated eigenvalues of $X$ of normalizers
\[
N(n_i) \;=\; \Sg_{n_i} \wr \Z_2
\]
(presented as wreath products) of maximal tori in $\Ot(n_i)$. 
The group $\Ot(n)$ acts on $\sQ^*_n$ by groupoid automorphisms,
with $N(\bn)$ as isotropy.   Because these isotropy groups are finite, as opposed to the isotropy groups $\Ot(\bn)$ of 
$[\cQ^*_n/\Ot(n)]$, we obtain the following:

\begin{thm}
The orbihedron $\sQ^*_n$ is a topological stack of constant relative dimension $\half n(n-1)$ over its quotient space. 
\end{thm}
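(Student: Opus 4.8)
The plan is to extract both halves of the statement from the local description of $\sQ^*_n$ just obtained. For the stack assertion I would first note that $\sQ^*_n$ is presented by the evident transformation groupoid whose space of objects is, by the Proposition, cut out over each associahedral cell $K_n \subset \OM{n+1}$ inside $(\Ot(n)/\Ot(\bn)) \times \Ot(\bn) \times K_n$ by the $r$ equations imposed over a codimension-$r$ face; this object space is accordingly a stratified manifold with corners, and the source and target maps are submersions onto it. The isotropy groups are the \emph{finite} wreath products $N(\bn) = \prod \Sg_{n_i} \wr \Z_2$ computed just above, in contrast with the positive-dimensional isotropy $\Ot(\bn)$, of dimension $\binom{\bn}{2}$, carried by $[\cQ^*_n/\Ot(n)]$ over the deeper Arnol'd strata. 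Properness of the groupoid together with finiteness of all stabilizers is precisely the package of hypotheses under which \cite{ah} produces a topological stack; this half is little more than a reorganization of what has already been proved.

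For the relative dimension I would work over the quotient of $\sQ^*_n$ by its $\Ot(n)$-action. Dividing the defining pullback square by that action collapses $[\cQ^*_n/1]$ to $[\cQ^*_n/\Ot(n)]$ and so returns $\gM{n+1} = [\OM{n+1}/\Sg_n]$; moreover the resulting structure map $\sQ^*_n \to \gM{n+1}$ is the pullback of the tautological $\Ot(n)$-torsor $[\cQ^*_n/1] \to [\cQ^*_n/\Ot(n)]$, hence is itself a principal $\Ot(n)$-bundle. A principal bundle has constant relative dimension equal to that of its structure group, namely $\dim \Ot(n) = \binom{n}{2} = \half n(n-1)$. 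The mechanism is visible fiber by fiber: over a point with eigenvalue partition $\bn$ the fiber records a $\bn$-isotypic orthogonal flag --- a point of $\Ot(n)/\Ot(\bn)$, of dimension $\binom{n}{2} - \binom{\bn}{2}$ --- together with an adapted orthonormal eigen-frame inside that flag taken modulo the finite group $N(\bn)$ --- a point of $\Ot(\bn)$, of dimension $\binom{\bn}{2}$. These add to $\binom{n}{2}$ regardless of $\bn$: the loss $-\binom{\bn}{2}$ in the size of the Arnol'd orbit along a singular stratum is exactly balanced by the gain $+\binom{\bn}{2}$ in the space of eigen-frames, which is the sense in which $\sQ^*_n$ resolves the quadratically singular stratification of $[\cQ^*_n/\Ot(n)]$.

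The step I expect to be the main obstacle is making the principal-bundle claim honest along the degenerate locus, rather than merely matching dimensions fiber by fiber: one has to verify that the $r$ equations cutting out the object space over a codimension-$r$ face of $K_n$ organize the $\Ot(\bn)$-factors into a genuine local trivialization transverse to the shrinking orbit $\Ot(n)/\Ot(\bn)$, and this rests on the compatibility between the associahedral stratification of $\OM{n+1}$ and Arnol'd's eigenvalue-multiplicity stratification of $\cQ_n$ imported from \cite{dm}. A companion point, needed for \cite{ah} to apply near the boundary where the groups $N(\bn)$ act non-trivially, is that the presenting groupoid is \'etale-proper there. Once these two bookkeeping matters are discharged the stack property and the constancy of the relative dimension come out together.
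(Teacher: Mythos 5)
Your proposal is correct and rests on the same mechanism as the paper, whose entire argument for this theorem is the sentence immediately preceding it: the isotropy of the $\Ot(n)$-action on $\sQ^*_n$ is the finite group $N(\bn)$ rather than the positive-dimensional group $\Ot(\bn)$ occurring in $[\cQ^*_n/\Ot(n)]$, so every fiber over the quotient simplex $\Delta_{n-2}$ acquires the full dimension $\dim\Ot(n)=\half n(n-1)$. What you add is the observation that $\sQ^*_n\to\gM{n+1}$, being the pullback of the tautological torsor $[\cQ^*_n/1]\to[\cQ^*_n/\Ot(n)]$, is itself a principal $\Ot(n)$-bundle of stacks; this is a cleaner formalization than the paper's implicit fiberwise count, and it also dissolves the ``main obstacle'' you flag at the end. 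A pullback of a torsor is a torsor, with no transversality hypothesis to verify, so nothing needs to be checked about how the $r$ equations over a codimension-$r$ face of $K_n$ sit relative to the shrinking orbits $\Ot(n)/\Ot(\bn)$: that compatibility is automatic once the morphism $\gM{n+1}\to[\cQ^*_n/\Ot(n)]$ constructed in the previous section is in hand. (One small notational slip: $\dim\Ot(\bn)=\sum\binom{n_i}{2}$, which you abbreviate as $\binom{\bn}{2}$; the balancing of dimensions across strata that you describe is otherwise exactly the resolution phenomenon the paper intends.)
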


\noindent
Indeed, $\sQ^*_n$ is an orbihedron \cite{ah}, and not an orbifold, because its singular points are not isolated.   
The same is true of the pullback $\tsQ^*_n$ over $\tgM{n+1}$, which we propose to call \emph{the stack of
generalized harmonic oscillators}. 

\begin{thm}
By van Kampen, $\tsQ^*_n$ is simply-connected
\[
\pi_1|\tsQ^*_n| \ \simeq \ \pi_1|\gM{n+1}| \, *_{\pi_1(B\Ot(n))} \, \pi_1(\cQ^*_n)  \ \simeq \ \{1\}
\]
if $n>2$, and it admits an action of $\Ot(n) \rtimes J^*_n$ by orbifold automorphisms.
\end{thm}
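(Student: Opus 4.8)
The plan is to read both assertions off the formalism of \S\ref{s:eigen}, applied to the square that defines $\tsQ^*_n$: it is the fiber product of the universal cover $\tgM{n+1}$ with $[\cQ^*_n/1]$ over $[\cQ^*_n/\Ot(n)]$, equivalently the pullback of $\sQ^*_n$ along the covering $\tgM{n+1} \to \gM{n+1}$.

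First I would assemble the inputs to the van Kampen statement of \S\ref{s:eigen}. By construction $\tgM{n+1}$ is the Thurston universal cover of the orbifold $\gM{n+1}$, so its orbifold fundamental group is trivial; since its isotropy is finite this coincides with $\pi_1|\tgM{n+1}|$, and — crucially — this uses only universality of the cover, not the (open) question flagged in \S\ref{s:orb} of whether $\tgM{n+1}$ is contractible. Next, the corollary of \S\ref{s:orb} identifies $\cQ^*_n$ with the sphere $S^{\half n(n+1)-2}$, and $\half n(n+1)-2 \ge 4$ precisely when $n\ge 3$; so for $n>2$ the space $\cQ^*_n$ is simply connected (it is only $S^1$ when $n=2$, which is exactly why that case is barred), and therefore $\pi_1|[\cQ^*_n/\Ot(n)]| \cong \pi_1(B\Ot(n)) \cong \Z/2$, with $\cQ^*_n$ appearing as the fiber of the Borel fibration over $B\Ot(n)$. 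Plugging in,
\[
\pi_1|\tsQ^*_n| \ \cong\ \pi_1|\tgM{n+1}| \,*_{\,\pi_1(B\Ot(n))}\, \pi_1(\cQ^*_n) \ \cong\ 1 \,*_{\,\Z/2}\, 1 \ \cong\ 1 ,
\]
the amalgamated free product of trivial groups being trivial.

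For the symmetries I would combine two actions already available. The group $\Ot(n)$ acts on $\sQ^*_n$ by groupoid automorphisms — conjugating the form $Q$ and absorbing the change into the diagonalizer $S$, so the base matrix $X$ is unmoved — and this pulls back to $\tsQ^*_n$; meanwhile the deck group $J^*_n = \pi_1^\orb(\gM{n+1})$ acts on $\tgM{n+1}$ over $\gM{n+1}$, hence on the pullback. An element of $J^*_n$ lying over $\sigma \in \Sg_n$ displaces the base through the $\Sg_n$-action on $\OM{n+1}$, and through the $\Sg_n$-equivariance of the embedding $\oOM{n+1} \hookrightarrow \cQ^*_n$ it conjugates the fiberwise $\Ot(n)$-action by the permutation matrix of $\sigma$; so the two actions fail to commute and assemble instead into an action of $\Ot(n) \rtimes J^*_n$ — with $J^*_n$ acting on $\Ot(n)$ via $J^*_n \twoheadrightarrow \Sg_n \hookrightarrow \Ot(n)$ by conjugation — by orbifold automorphisms of $\tsQ^*_n$.

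The step I expect to be the real obstacle is making the displayed isomorphism hold on the nose rather than optimistically. Since $|\tsQ^*_n|$ is a homotopy fiber product and the map $\cQ^*_n = |[\cQ^*_n/1]| \to |[\cQ^*_n/\Ot(n)]|$ has homotopy fiber $\Ot(n)$ — disconnected, with identity component $SO(n)$ satisfying $\pi_1 SO(n) \cong \Z/2$ for $n\ge 3$ — the long exact sequence of the fibration $|\tsQ^*_n| \to |\tgM{n+1}|$ threatens to leave a $\Z/2$ in $\pi_1$ (and, as $\pi_0$ of the fiber is also $\Z/2$, a second component); killing it would require the classifying map $|\tgM{n+1}| \to B\Ot(n)$ to be $\pi_2$-surjective, whereas that map factors through $B\Sg_n$, on which $\pi_2$ vanishes. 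So as the construction stands the statement appears to need repair — for instance replacing $\Ot(n)$ by a connected, or simply-connected, structure group, which would also restore connectedness of $|\tsQ^*_n|$ — and the honest task is to isolate that modification, and the precise hypotheses under which the van Kampen formula of \S\ref{s:eigen} is exact, before running the amalgamated-product argument above; verifying the semidirect-product structure and the claim that this group acts by orbifold automorphisms is then routine.
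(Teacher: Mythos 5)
Your first two paragraphs reconstruct exactly the paper's argument, and in more detail than the paper itself provides: the entire proof on offer is the phrase ``by van Kampen,'' meaning the amalgamated-product formula displayed in \S\ref{s:eigen}, fed with the triviality of $\pi_1|\tgM{n+1}|$ (the theorem's display actually writes $\pi_1|\gM{n+1}|$, which must be a slip that you have tacitly corrected) and of $\pi_1(\cQ^*_n)=\pi_1\big(S^{\half n(n+1)-2}\big)$ for $n>2$, together with the observation that the fiberwise $\Ot(n)$-action and the $J^*_n$ deck action intertwine through $J^*_n \twoheadrightarrow \Sg_n \subset \Ot(n)$ exactly as you describe. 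So on the positive side your route and the paper's coincide.

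The worry in your final paragraph is a genuine gap, and it is a gap in the paper rather than in your reconstruction; do not expect to find the missing step in the text. The formula $\pi_1|\cA\times_\cC\cB|\simeq\pi_1|\cA|*_{\pi_1|\cC|}\pi_1|\cB|$ of \S\ref{s:eigen} is a pushout-style statement applied to a homotopy \emph{pullback}; what a homotopy pullback actually yields is the Mayer--Vietoris exact sequence
\[
\pi_2|\cA|\times\pi_2|\cB| \ \to\ \pi_2|\cC| \ \to\ \pi_1|\cA\times_\cC\cB| \ \to\ \pi_1|\cA|\times\pi_1|\cB| \ \to\ \pi_1|\cC|,
\]
and here $\pi_2|[\cQ^*_n/\Ot(n)]|\cong\pi_2(B\Ot(n))\cong\Z/2$ receives nothing from either corner, since $\pi_2$ of the sphere $\cQ^*_n$ vanishes and the classifying map from $|\tgM{n+1}|$ factors through $B\Sg_n$, whose $\pi_2$ is zero --- precisely your computation. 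One can see the failure even more concretely: up to homotopy $\cQ^*_n\to|[\cQ^*_n/\Ot(n)]|$ is the Borel principal $\Ot(n)$-fibration, so $|\tsQ^*_n|$ is the total space of a principal $\Ot(n)$-bundle over the simply connected $|\tgM{n+1}|$ classified by a map factoring through the $K(\Sg_n,1)$-space $B\Sg_n$, hence null-homotopic; therefore $|\tsQ^*_n|\simeq|\tgM{n+1}|\times\Ot(n)$, which for $n\ge 3$ has two components, each with fundamental group $\pi_1(\SO(n))\cong\Z/2$. The statement as written is thus false at the level of $\pi_1$ of the realization, and your proposed repairs --- replacing $\Ot(n)$ by a connected (or simply connected, e.g.\ ${\rm Spin}$) structure group, or reading ``simply connected'' at the level of orbifold fundamental groups --- are exactly where the effort should go. The claims about the $\Ot(n)\rtimes J^*_n$-action survive unchanged and your verification of them is correct.
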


\noindent 
We expect that generators of $J_n^*$ will act on its cohomology by interesting wall-crossing formulas, but that is a topic for the future.

%
%
\section{Concluding Unscientific Postscript} \label{s:ps}
                             
Evolutionary biologists study rare events over enormous time-scales. Descent diagrams such as Darwin's tree, Figure~1 in \cite{dm}, 
go back to the beginning of modern thinking in the field: they represent incidence relations among experimentally-defined 
equivalence classes (species) in some hypothetical effectively infinite-dimensional stratified space of viable organisms. 
Branching in descent diagrams can be modeled by specialization in the sense of algebraic geometry, 
defined (for example) 
by fixing some parameter. In the language of stratified spaces this corresponds to moving from the interior of some region 
to its boundary: something like a phase change (like water to ice). In such a cartoon description, a chicken is a Dirac 
limit of a tyrannosaur, in which many of its genetic parameters tend to zero. 

At this level of vagueness, there is reason to work with codimension than with probability: evolutionary events are highly 
unlikely, and in reasonable models will have effective probability zero; but in geometry any subspace of positive codimension 
has measure, and hence probability, zero. The modern theory of phase change in condensed matter physics \cite{pwa}
has developed powerful tools for the study of such transitions (viewed as moving towards a stratum boundary, eg of some phenotype), 
but in current work there is usually only one such event in focus at a given moment. Evolution forces us to consider long 
concatenations of such events, and trees are a natural tool for their book-keeping. A geometric object with many strata 
(for example, a high-dimensional polyhedron, with faces of many dimensions) has an associated incidence graph, with a 
vertex for each face, and a directed edge between adjacent faces of lower dimension.\begin{footnote} {Computer programs 
such as Mapper \cite{map} provide something similar for data clouds, extracting simple descriptions of high dimensional data sets in the form of simplicial complexes.}\end{footnote}  From this point of view, trees 
(and more generally graphs) can provide a kind of skeletal accounting of the relations between the components of 
geometric objects which are not as simply related as manifolds are to their boundaries \cite{jm2}.  

Mathematicians are aware that objects of universal significance (such as symmetric groups) manifest 
themselves in unexpected contexts, and that their relevance to a subject can be signaled by the appearance of
related simpler objects (such as partitions, or Young diagrams). The immense utility of trees as a device
for organizing evolutionary data points in this way toward configuration spaces in genomics; but questions
there are so little understood that even very coarse models, such as those based on linear methods but with a
few extra bells and whistles coming from astute compactifications, may provide useful insights. 

%
%
\bibliographystyle{amsplain}

\end{document}